\documentclass[11pt]{article} 
\usepackage[dvipdfmx]{graphicx}
\usepackage{tikz}
\usepackage{amsmath}
\usepackage{amssymb}
\usepackage{amsthm}
\usepackage{amssymb}
\usepackage{verbatim}

\title{Some extensions of Delete Nim}
\author{Tomoaki Abuku\thanks{National Institute of Informatics, buku3416@gmail.com}
\and
Ko Sakai\thanks{Kanagawa University, gummosakai@gmail.com} 
\and
Masato Shinoda\thanks{Nara Women's University, shinoda@cc.nara-wu.ac.jp} 
\and
Koki Suetsugu\thanks{National Institute of Informatics, suetsugu.koki@gmail.com}
}

\date{2022}

\begin{document}
\theoremstyle{definition} 
\newtheorem{theorem}{Theorem}[section]
\newtheorem{definition}[theorem]{Definition}
\newtheorem{lemma}[theorem]{Lemma}
\newtheorem{proposition}[theorem]{Proposition}
\newtheorem{corollary}[theorem]{Corollary}
\newtheorem{example}[theorem]{Example}
\newtheorem{conjecture}[theorem]{Conjecture}

\maketitle

\begin{abstract}
Nim is a well-known combinatorial game with several variants, e.g., Delete Nim and Variant Delete Nim.
In Variant Delete Nim, the player deletes one of the two heaps of stones and splits the other heap on his/her turn.
In this paper, we discuss generalized Variant Delete Nim, which generalizes the number of stone heaps to three or more, All-but-one-delete Nim, Half-delete Nim, No-more-than-half-delete Nim, and Single-delete Nim. We study the win-loss conditions for each of these games.

\end{abstract}
\section{Introduction}
Combinatorial games are $2$-player games with neither chance elements nor hidden information (for the details on combinatorial game theory, see, e.g., Albert et al. \cite{alb19} and Siegel \cite{Siegel}). 

Nim is one of the oldest and most well-known combinatorial games. The basic rule of the game is that two players take turns choosing one of several heaps and take as many tokens from the heap as they like, and the player who cannot take a token from the heap loses.
In this paper, we only deal with the rule that a player loses if he/she cannot make any possible moves on his/her turn. This rules is called the normal rule.

Nim is a two-player zero-sum complete information-confirmation finite game.
Since there are no draws, every position can be classified into two types: $\mathcal{N}$-position, in which the player whose turn it is to play has a strategy to win the game, and $\mathcal{P}$-position, in which the player whose turn it is to play does not have a winning strategy. The distinction mentioned above between $\mathcal{N}$-positions and $\mathcal{P}$-positions in Nim was shown by Bouton \cite{bou02}, and it is now known that the win-loss conditions of this game contain mathematically interesting structures. In addition, a more detailed analysis of the game has been conducted to obtain the Sprague-Grundy values for each game. The Sprague-Grundy values are useful for determining the winners of games that combine multiple games. For more information on these games, refer to \cite{alb19}.

Variants of the game with different rules for taking tokens in Nim
Moore's game, Welter-Sato's game (Maya game), Wythoff's game, etc., have been proposed.
The mathematical analysis of these games is a subject of interest. There are also games, such as Grundy's game, for splitting heaps of tokens.

In this paper, we propose a generalization of Delete Nim and discuss its win-loss conditions. 

This paper is organized as follows. Section 2 describes the rules of Delete Nim; the content of this section is concerned with the case where the number of stone heaps is two. Section 3 and beyond describe the various rules for extending the number of stone heaps in Delete Nim to three or more and the conditions for determining the winners.
Section 3 deals with All-but-one-delete Nim, Section 4 with No-more-than-half-delete Nim, Section 5 with Half-delete Nim, and Section 6 with Single-delete Nim.

Note that there is a game with a rule called ``Split-and-delete Nim" that reverses the order of deleting heaps and splitting heaps (see Abuku et al. \cite{abu22}).
In order to clearly distinguish between these rules, we call the rule defined in this paper ``Delete-and-split Nim".

\section{Delete Nim}

%
%
%
%
%
%
In this section, we review the rules of Delete Nim and the determination of winners and losers as previously mentioned in Abuku and Suetsugu \cite{abu21}.

\begin{definition}[The rules of Delete Nim]
There are $2$ heaps of tokens. The player performs the following two operations in succession on his/her turn.
\begin{itemize}
    \item Selects a non-empty heap and deletes the other heap.
    \item Removes $1$ token from the selected heap and splits the heap into two (possibly empty heaps).
\end{itemize}
\end{definition}


The $p$-adic valuation of an integer $n$, which we shall denote by $v_p(n)$, is the exponent of the highest power of the prime number $p$ that divides $n$. In this paper, only $2$-adic valuation will be treated.  

\begin{theorem}[Abuku and Suetsugu \cite{abu21}]\label{DN}
Let $\langle x,y \rangle$ be a Delete Nim position, where $x$ and $y$ represent the number of tokens in each heap. The Sprague-Grundy value of $\langle x,y \rangle$ is $v_2((x\vee y)+1)$, 
where $\vee$ is the bitwise OR-operation. In particular, $\langle x,y \rangle$ is a  $\mathcal{P}$-position if and only if both $x$ and $y$ are even.
\end{theorem}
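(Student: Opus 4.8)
The plan is to prove the stronger Sprague--Grundy formula $\mathcal{G}(\langle x,y\rangle)=v_2((x\vee y)+1)$ by strong induction on $x+y$, from which the $\mathcal{P}$-position statement is immediate: $v_2((x\vee y)+1)=0$ holds exactly when $x\vee y$ is even, i.e.\ when the lowest binary digit of both $x$ and $y$ vanishes, i.e.\ when $x$ and $y$ are both even. The base case $\langle 0,0\rangle$ is terminal, and $v_2((0\vee 0)+1)=v_2(1)=0$ agrees with the value $\mathcal{G}=0$ obtained as the mex of the empty set.

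For the inductive step I would first pin down the option set. From $\langle x,y\rangle$ one deletes a heap and then removes a token from the surviving heap and splits it; hence the options are exactly the positions $\langle a,b\rangle$ with $a+b=x-1$ (using the first heap, when $x\geq 1$) or with $a+b=y-1$ (using the second heap, when $y\geq 1$). Each such option has strictly smaller token count, so by the induction hypothesis it has value $v_2((a\vee b)+1)$, and it remains to compute the mex of the collection of these values over both admissible sums.

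The heart of the argument is the counting lemma that, for every $s\geq 0$,
\[
\{\,v_2((a\vee b)+1) : a,b\geq 0,\ a+b=s\,\}=\{\,i\geq 0 : \text{the $i$-th binary digit of } s+1 \text{ equals } 1\,\}.
\]
I would prove this using the identity $a+b=(a\vee b)+(a\wedge b)$, which reparametrizes the splittings of $s$ by the achievable values of $w=a\vee b$. Demanding valuation exactly $i$ amounts to requiring that $w$ have binary digits $0,\dots,i-1$ all equal to $1$ and digit $i$ equal to $0$; by forcing digit $i$ of both $a$ and $b$ to vanish and examining how their low-order digits add, this turns out to be realizable precisely when $s \bmod 2^{i+1}$ lies in $\{2^i-1,\dots,2^{i+1}-2\}$, which is in turn equivalent to digit $i$ of $s+1$ being $1$. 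The main obstacle is exactly this carry analysis: one must verify both that the forced low-order configuration of $a$ and $b$ can always be completed to a genuine pair summing to $s$ (the high-order digits being free), and that the boundary value $s\equiv 2^{i+1}-1$ is correctly excluded, since there the carry clears digit $i$ of $s+1$.

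Granting the lemma, the proof concludes cleanly. Since $(x-1)+1=x$, the options from the first heap contribute the set of positions where $x$ has a $1$, and likewise those from the second heap contribute the positions of $y$; their union is the set of positions where $x\vee y$ has a $1$ (the degenerate cases of an empty heap are absorbed because $0\vee y=y$). Finally, the mex of the set of positions where a number $m$ carries a $1$ is precisely the least index where $m$ carries a $0$, namely $v_2(m+1)$; applying this with $m=x\vee y$ yields $\mathcal{G}(\langle x,y\rangle)=v_2((x\vee y)+1)$ and completes the induction.
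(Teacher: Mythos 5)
Your proposal is mathematically sound, but note that the paper you are comparing against does not actually prove Theorem~\ref{DN}: it is quoted from Abuku and Suetsugu \cite{abu21} without proof (the only argument in this paper that touches it is the easy $n=2$ specialization of Theorem~\ref{thm_abo}, which recovers only the $\mathcal{P}$-position criterion, not the Grundy values). So there is no in-paper proof to match; judged on its own merits, your induction goes through. The option set is correctly identified as all $\langle a,b\rangle$ with $a+b=x-1$ or $a+b=y-1$, and your counting lemma is the right key step: for $a+b=s$ one has $v_2((a\vee b)+1)=i$ iff $a\vee b\equiv 2^i-1 \pmod{2^{i+1}}$, which forces bit $i$ of both $a$ and $b$ to be $0$ and the low parts $a\bmod 2^i,\ b\bmod 2^i$ to have OR equal to $2^i-1$; their sum then lies in $[2^i-1,\,2^{i+1}-2]$ and equals $s\bmod 2^{i+1}$, giving necessity, while sufficiency is witnessed explicitly by $a=2^i-1$, $b=s-(2^i-1)$. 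Together with $\operatorname{mex}\{i:\text{bit }i\text{ of }m\text{ is }1\}=v_2(m+1)$ applied to $m=x\vee y$, this completes the Grundy computation, and the $\mathcal{P}$-position statement follows since $v_2((x\vee y)+1)=0$ exactly when $x\vee y$ is even. The only part you leave as a sketch is the carry analysis in the lemma, but the statement you reduce it to is correct and the verification above shows it closes without difficulty.
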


The following game, called Variant Delete Nim or VDN, is defined in Stankova and Rike \cite{sta08}.

\begin{definition}[The rules of VDN]
There are $2$ heaps of tokens. The player performs the following two operations in succession on his/her turn.
\begin{itemize}
    \item Selects one heap and deletes it.
    \item Splits the remaining heap into $2$ (non-empty) heaps.
\end{itemize}
\end{definition}


VDN and Delete Nim are equivalent games with respect to legal moves.
We can see that the position $\langle x,y \rangle$ of VDN corresponds to the position $\langle x-1,y-1\rangle$ of Delete Nim.
Therefore, according to Theorem \ref{DN}, in VDN, the condition that each member of the pair
 $\langle x,y \rangle$ be odd is required for $\langle x,y \rangle$ to be a $\mathcal{P}$-position.
This decision condition is also included in Theorem \ref{thm_abo} of this paper. 
%

In the following sections, we will consider various rules when the number of token heaps $n$ is generalized to 3 or more.
The extended rules discussed below all follow the VDN setting, where some of the $n$ heaps of tokens are deleted and the remaining heaps are split, and the number of heaps $n$ before and after the turn is assumed to remain the same. Also, when splitting a heap of tokens every heap must contain at least one token.

The two operations that a player performs in a turn (deleting and splitting a heap) are called a {\it move}, and a position that can be transitioned from one position to another by a single move is called an {\it option}.
A position that has no option is called a {\it terminal position}, and the player whose turn it is to play in this terminal position loses the game.

\section{All-but-one-delete Nim}
In this section, we introduce a variant of delete nim, All-but-one-delete Nim or ABO-delete Nim. In this ruleset, all heaps except for one heap are removed in a move.

\subsection{The rule of ABO-delete Nim}
\begin{definition}[ABO-delete Nim]
\label{def_ABOdelnim}
There are $n$ heaps of tokens. The player performs the following two operations in succession on his/her turn.
\begin{itemize}
    \item Selects $n-1$ heaps and deletes them.
    \item Splits the remaining $1$ heap into $n$ heaps.
\end{itemize}
\end{definition}

The set of all positions in ABO-delete Nim is $G_n = \{ \langle z_1, z_2, \ldots, z_n \rangle \mid  z_1, z_2, \ldots, z_n \in \mathbb{N}\}$.
If $n = 2$, then the ruleset is the same as VDN. Therefore, ABO-delete Nim can be considered as a generalization of VDN.
From Definition \ref{def_ABOdelnim}, the set of terminal positions of ABO-delete Nim is $\{ \langle z_1, z_2, \ldots, z_n \rangle \mid 1 \leq z_1, z_2, \ldots, z_n \leq n-1\}$.

\subsection{Characterizing positions of ABO-delete Nim}
\begin{theorem}
\label{thm_abo}
All-but-one-delete Nim position $\langle z_1, z_2,\ldots, z_n \rangle$ is a $\mathcal{P}$-position if and only if 

{\rm (*)} for every $i$, the remainder of $z_i$ divided by $n(n-1)$ is between $1$ and $n-1$.
\end{theorem}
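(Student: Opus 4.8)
The plan is to show that the set $\mathcal{A}$ of positions satisfying condition (*) coincides with the set of $\mathcal{P}$-positions. Write $m = n(n-1)$, and for a heap size $z$ call its residue $z \bmod m$ \emph{good} if it lies in $\{1, \ldots, n-1\}$ and \emph{bad} otherwise; thus $\mathcal{A}$ consists of exactly those positions all of whose heaps are good. First I would record that the game is finite: a move replaces $\langle z_1, \ldots, z_n \rangle$ by a splitting of a single selected heap $z_i$, so the new total equals $z_i$, which is strictly smaller than the old total $\sum_j z_j$ because the other $n-1$ heaps each contain at least one token. Backward induction is therefore available, and it suffices to verify the three standard conditions: (a) every terminal position lies in $\mathcal{A}$; (b) no move from a position in $\mathcal{A}$ lands in $\mathcal{A}$; (c) from every position outside $\mathcal{A}$ some move lands in $\mathcal{A}$. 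Condition (a) is immediate, since a terminal position has every $z_i \in \{1, \ldots, n-1\}$ and $n-1 < m$, so all its heaps are good.

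The engine of the whole argument is one elementary counting fact that I would isolate as a lemma: if $s_1, \ldots, s_n$ are integers each lying in $\{1, \ldots, n-1\}$, then $\sum_j s_j$ takes exactly the values $\{n, n+1, \ldots, n(n-1)\} = \{n, \ldots, m\}$, and reducing this set modulo $m$ gives precisely $\{0\} \cup \{n, \ldots, m-1\}$, which is exactly the set of \emph{bad} residues. In other words, the good residues and the residues attainable as a sum of $n$ good residues are complementary modulo $m$, and this is precisely why $n(n-1)$ is the correct modulus.

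For (b), suppose $\langle z_1, \ldots, z_n \rangle \in \mathcal{A}$ and a move splits the good heap $z_i$ into $w_1, \ldots, w_n$. If every $w_j$ were good, then writing $w_j \equiv s_j \pmod m$ with $s_j \in \{1, \ldots, n-1\}$ would give $z_i \equiv \sum_j s_j \pmod m$, a bad residue by the lemma, contradicting that $z_i$ is good; hence some $w_j$ is bad and the option leaves $\mathcal{A}$. For (c), take a position with a bad heap $z_i$, so $r := z_i \bmod m \in \{0\} \cup \{n, \ldots, m-1\}$; note $z_i \geq n$ in either case, so the heap is genuinely splittable. I would construct a good split explicitly: by the lemma choose residues $s_1, \ldots, s_n \in \{1, \ldots, n-1\}$ with $\sum_j s_j \equiv r \pmod m$ (taking $\sum_j s_j = r$ when $r \geq n$, and $\sum_j s_j = m$, i.e. all $s_j = n-1$, when $r = 0$), set $q = (z_i - \sum_j s_j)/m \geq 0$, and define $w_1 = mq + s_1$ and $w_j = s_j$ for $j \geq 2$. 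Each $w_j \geq 1$, the $w_j$ sum to $z_i$, and each is good, so the resulting position lies in $\mathcal{A}$.

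The main obstacle is the constructive direction (c): one must check that the chosen bad heap is really splittable ($z_i \geq n$) and that a good split with the prescribed residue sum actually exists, the residue-$0$ case being the tight one, where the sum is forced to equal $m$ and hence all parts to equal $n-1$. A secondary point to settle at the outset is the convention that every heap carries at least one token, so that, for instance, an empty heap is never the unique bad heap of an otherwise terminal position; under this convention the three conditions close up and yield $\mathcal{A} = \mathcal{P}$.
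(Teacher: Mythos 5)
Your proposal is correct and takes essentially the same approach as the paper: both arguments verify that the set defined by (*) contains the terminal positions, that no option of a position satisfying (*) again satisfies (*) (via the observation that a sum of $n$ residues from $\{1,\dots,n-1\}$ reduces modulo $n(n-1)$ to a ``bad'' residue), and that every position violating (*) has a bad heap of size at least $n$ that can be split into $n$ good heaps. The only difference is presentational: you carry out the splitting construction explicitly (choosing residues $s_j$ summing to $r$ or to $n(n-1)$ and absorbing the multiple of $n(n-1)$ into one part), whereas the paper defers this step to a later lemma on splitting evenoid numbers.
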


Note that this theorem generalizes Theorem \ref{DN}.
\begin{proof}
Let $G_n = \{ \langle z_1, z_2, \ldots, z_n \rangle \mid  z_1, z_2, \ldots, z_n \in \mathbb{N}\}$, $P$ be the subset of $G_n,$ which satisfies (*), and $N = G_n \setminus P$. Obviously, $P$ contains the terminal positions of ABO-delete Nim and this game is not a loopy game. Therefore, it is enough to show (i) every position in $P$ has no option in $P$ and (ii) every position in $N$ has at least one option in $P$.

(i)
Assume that $Z = \langle z_1, z_2, \ldots, z_n \rangle \in P$ and $Z' = \langle z'_1, z'_2, \ldots, z'_n \rangle$ is an option of $Z$. Then there exists $i$ such that $z_i = z'_1 + z'_2 + \cdots + z'_n$. Therefore, from (*), the reminder of $z'_1 + z'_2 + \cdots + z'_n$ divided by $n(n-1)$ is larger than $0$ and less than $n$. On the other hand, if $z'_1, z'_2, \ldots, z'_n$ satisfies (*), then the sum of reminders of $z'_1, z'_2, \ldots, z'_n$ is between $n$ and $n(n-1)$, which is a contradiction.
Thus, $Z'  \not \in P$.

(ii)
Assume that $Z = \langle z_1, z_2, \ldots, z_n \rangle \in N$. Then, there exists $z_i$ whose reminder divided by $n(n-1)$ is larger than or equal to $n$ (Here, we say the reminder is $n(n-1)$ if $z_i$ can be divided by $n(n-1)$).
By removing all heaps except for this heap and splitting this heap into $n$ heaps, one can have a position in $P$.
\end{proof}

How to split the heap into $n$ heaps at the last of the proof will be presented in Lemma \ref{lem_oddoidevenoid}(2).

\section{No-more-than-half-delete Nim}
Next, we introduce No-more-than-half-delete Nim or NMTH-delete Nim. In this ruleset, the player removes no more than half heaps of all heaps in a move.

\subsection{The rule of NMTH-delete Nim}
\begin{definition}[NMTH-delete Nim]
There are $n$ heaps of tokens. The player performs the following two operations in succession on his/her turn.
\begin{itemize}
    \item Chooses a positive integer $k$ such that $k\leq \frac{n}{2}$, selects $k$ heaps, and deletes them.
    \item Selects $k$ heaps of the remaining $n-k$ heaps and splits each heap into two heaps.
\end{itemize}
\end{definition}

Note that if $n = 2$, then the rule is the same as VDN and if $n = 3,$ then the rule is the same as single delete nim with the number of heaps is $3$, presented in Section \ref{sec_single}.

\subsection{Characterizing positions in NMTH-delete Nim}

We found following theorem for NMTH-delete Nim.

\begin{theorem}
\label{thm_nmth}
No-more-than-half-delete Nim position $\langle z_1, z_2, \ldots, z_n \rangle$ is a $\mathcal{P}$-position if and only if $z_1, z_2, \ldots, z_n$ satisfy the following condition:
\renewcommand{\labelenumi}{(\roman{enumi})}
\begin{enumerate}
\item $v_2(z_1)=v_2(z_2)=\cdots=v_2(z_n)=0$ if $n$ is even,
\item $v_2(z_1)=v_2(z_2)=\cdots=v_2(z_n)$ if $n$ is odd.
\end{enumerate}
\end{theorem}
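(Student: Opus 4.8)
The plan is to reuse the two-step template from the proof of Theorem~\ref{thm_abo}. Let $P\subseteq G_n$ be the set of positions satisfying the condition in the statement and $N=G_n\setminus P$. Since every move deletes at least one non-empty heap, the total number of tokens strictly decreases, so the game is not loopy and it suffices to show that (a)~no position of $P$ has an option in $P$, and (b)~every position of $N$ has an option in $P$. I would first note that the unique terminal position is $\langle 1,\dots,1\rangle$ (as soon as some heap has size $\ge 2$ one can move with $k=1$), and that it lies in $P$ under both parities. The technical engine is a splitting lemma for the $2$-adic valuation: if $z=a+b$ with $a,b\ge 1$, then either $v_2(a)\ne v_2(b)$, whence $v_2(z)=\min(v_2(a),v_2(b))$, or $v_2(a)=v_2(b)=w$, whence $v_2(z)>w$. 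From this I would extract exactly what I need, namely that splitting a heap of valuation $v$ never produces two parts both of valuation $v$, that a heap of valuation $>v$ can always be split into two heaps of valuation exactly $v$ (take $a=2^{v}$), and that an odd heap always splits into one odd and one even part.

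For $n$ even the argument is a clean parity count. Closure (a): from an all-odd position, a move splits $k\ge 1$ odd heaps, and since each split of an odd heap produces an even part, the result contains an even heap and lies in $N$. Reachability (b): if there are $m\ge 1$ even heaps, I would split $\lceil m/2\rceil$ of them into two odd parts each, delete the remaining $\lfloor m/2\rfloor$ even heaps together with (when $m$ is odd) one odd heap, and leave the other heaps untouched; the choice $k=\lceil m/2\rceil\le n/2$ is legal precisely because $n$ is even, and the resulting position is all-odd, hence in $P$.

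For $n$ odd both directions hinge on $k\le\lfloor n/2\rfloor=(n-1)/2$, which forces $n-2k\ge 1$: after any move at least one heap is neither deleted nor split. For closure (a), that surviving heap keeps the common valuation $v$ of a $P$-position, so if the option were again in $P$ its common valuation would be $v$; but each of the $k\ge 1$ split heaps yields a part of valuation $\ne v$ by the lemma, a contradiction. For reachability (b), let $v=\min_i v_2(z_i)$, let $B$ be the set of heaps of valuation $>v$, and $b=|B|\ge 1$ (since the position lies in $N$). I would split $s=\lceil b/2\rceil$ heaps of $B$ into two valuation-$v$ heaps each, delete the remaining $b-s$ heaps of $B$ together with $2s-b\in\{0,1\}$ heaps of valuation $v$, and keep the rest; this uses $k=s\le(n-1)/2$ and produces an all-valuation-$v$ position in $P$.

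I expect the main obstacle to be reachability in the odd case. One must realize that the correct target is the \emph{minimum} valuation rather than an arbitrary common value, establish the lemma that higher-valuation heaps break into two minimum-valuation heaps, and verify the bookkeeping that the mandatory $k$ deletions can exactly absorb the $b-s$ unsplit high-valuation heaps while $s=\lceil b/2\rceil$ stays within $k\le n/2$. The odd-case closure is delicate in one respect only, namely the observation that $n$ being odd is exactly what guarantees a surviving unsplit heap, which is where the parity hypothesis on $n$ is genuinely used.
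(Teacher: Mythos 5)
Your proposal is correct and follows essentially the same route as the paper: a closure-plus-reachability argument for the sets $P_e$ (all heaps odd, $n$ even) and $P_o$ (all $2$-adic valuations equal, $n$ odd), using the same valuation facts (Propositions \ref{prop_2adic1} and \ref{prop_2adic2}), the observation that $k\le n/2$ with $n$ odd leaves a surviving unsplit heap, and the strategy of normalizing to the minimum valuation. The only difference is cosmetic bookkeeping in the reachability step (your uniform $\lceil\cdot/2\rceil$ split/delete allocation versus the paper's two subcases), which yields the same moves.
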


For proving this theorem, we prepare following propositions. They are trivial, so we omit the proof.

\begin{proposition}
\label{prop_2adic1}
For $x,y,z \in \mathbb{N}$, if $x + y = z$, then following (i) and (ii) holds.
\renewcommand{\labelenumi}{(\roman{enumi})}
\begin{enumerate}
\item If $v_2(x) = v_2(y),$ then $v_2(z) > v_2(x)$.
\item If $v_2(x) \neq v_2(y),$ then $v_2(z) = \min \{ v_2(x), v_2(y)\}.$
\end{enumerate}
\end{proposition}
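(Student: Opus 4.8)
The plan is to reduce both parts to the elementary fact that the sum of two odd numbers is even, while the sum of an odd number and an even number is odd. First I would set $a = v_2(x)$ and $b = v_2(y)$ and write each summand in normalized form $x = 2^a x'$ and $y = 2^b y'$, where $x'$ and $y'$ are odd by the definition of the $2$-adic valuation. The entire argument then comes down to factoring the largest obvious power of $2$ out of $z = x + y$ and reading off the parity of the remaining factor.

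For part (i), with $a = b$ I would factor $z = 2^a (x' + y')$. Since $x'$ and $y'$ are both odd, their sum is even, so $x' + y' = 2m$ for some integer $m$, and hence $z = 2^{a+1} m$. This gives $v_2(z) \ge a + 1 > a = v_2(x)$, which is the claimed strict inequality. Note that the statement only asserts $v_2(z) > v_2(x)$ rather than an exact value, so no finer analysis of $m$ is required.

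For part (ii), assume without loss of generality that $a < b$, and factor $z = 2^a\bigl(x' + 2^{b-a} y'\bigr)$. Here $x'$ is odd while $2^{b-a} y'$ is even because $b - a \ge 1$, so the bracketed factor is odd. Consequently no power of $2$ beyond $2^a$ divides $z$, giving $v_2(z) = a = \min\{v_2(x), v_2(y)\}$.

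There is no genuine obstacle in this argument; the only points requiring care are tracking which factor is odd in each case and, in part (ii), the symmetry between the roles of $x$ and $y$, which the ``without loss of generality'' reduction dispatches. This is precisely why the authors describe the proposition as trivial.
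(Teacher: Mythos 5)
Your proof is correct. The paper omits the proof entirely, declaring the proposition trivial, and your argument (writing $x = 2^a x'$, $y = 2^b y'$ with $x', y'$ odd and reading off the parity of the cofactor of the extracted power of $2$) is exactly the standard one the authors evidently had in mind.
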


\begin{proposition}
\label{prop_2adic2}
Assume that $z \in \mathbb{N}$ and $v_2(z) > 0$. For any nonnegative integer $k < v_2(z)$, there exist $x, y \in \mathbb{N}$ such that $x+y=z$ and $v_2(x) = v_2(y) =k$.
\end{proposition}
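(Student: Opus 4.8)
The plan is to give a direct constructive argument: since the conclusion prescribes the exact $2$-adic valuations of $x$ and $y$, I would build such a pair explicitly rather than argue by abstract existence. The requirement $v_2(x) = v_2(y) = k$ forces me to write $x = 2^k a$ and $y = 2^k b$ with $a$ and $b$ odd positive integers, so the equation $x + y = z$ reduces to $a + b = z/2^k$. The whole problem thus becomes the question of whether the integer $z/2^k$ can be expressed as a sum of two odd positive integers.

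The crucial point is that $z/2^k$ is itself even. Indeed, by the definition of the $2$-adic valuation we have $v_2(z/2^k) = v_2(z) - k$, and the hypothesis $k < v_2(z)$ guarantees this is at least $1$, so $z/2^k = 2m$ for some integer $m \ge 1$. First I would record this, and then observe that every even integer at least $2$ is a sum of two odd positive integers, the simplest decomposition being $2m = 1 + (2m-1)$. Translating back through the factor $2^k$, I would take
\[
x = 2^k, \qquad y = z - 2^k = 2^k(2m-1).
\]
Then $x + y = z$ holds by construction, the identity $v_2(x) = k$ is immediate, and $v_2(y) = k + v_2(2m-1) = k$ since $2m-1$ is odd; moreover both $x$ and $y$ are positive, so their valuations are well defined.

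Honestly, this proposition carries no real obstacle, being exactly the kind of statement the paper flags as trivial. The only subtlety worth stating carefully is the single step where I invoke $k < v_2(z)$ to conclude that $z/2^k$ is even; without that hypothesis the target value could be odd, in which case no decomposition into two odd summands exists. Everything else is routine verification of the three conditions $x + y = z$, $v_2(x) = k$, and $v_2(y) = k$ for the explicit pair above, so I would keep the write-up to a few lines.
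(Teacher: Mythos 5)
Your construction is correct and is exactly the one the paper gives: right after the proposition the authors note that $(x,y) = (z - 2^k, 2^k)$ works, which is your pair with the roles of $x$ and $y$ swapped. Your verification that $v_2(z - 2^k) = k$ via the hypothesis $k < v_2(z)$ is the right (and only) point needing care, so nothing further is required.
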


For instance, $(x,y)= (z - 2^k,2^k)$ satisfies $x+y=z$ and $v_2(x) = v_2(y) =k$.

In the rest of this paper, we say a heap is {\em even (resp. odd) heap} if the number of stones of the heap is an even (resp. odd) number.

\begin{proof}[Proof of Theorem \ref{thm_nmth}]
Let $P_e$ be the set of positions in which every heap is an odd heap and $N_e$ be the set of positions in which at least one heap is an even heap.
We also let $P_o$ be the set of positions such that every $2$-adic valuation of the size of a heap is the same number and $N_o$ be the set of positions with several different $2$-adic valuations of the size of a heap.

(i) Assume that $n$ is an even number.
Since there are one even heap and one odd heap after an odd heap is split, a position in $P_e$ has no option in $P_e$. Next, consider a position in $N_e$. If the number of even heaps is larger than or equal to $\frac{n}{2}$, then by deleting heaps other than $\frac{n}{2}$ even heaps and splitting the remaining even heaps into odd heaps, one can obtain a position in $P_e$. If there are less than $\frac{n}{2}$ even heaps, then by deleting the same number of odd heaps as even heaps and splitting all even heaps into odd heaps, one can obtain a position in $P_e$.

(ii) Assume that $n$ is an odd number.
Then, after one move, at least one heap remains undeleted and unsplit.
From the contraposition of proposition \ref{prop_2adic1}, if one heap is split, then at least one $2$-adic valuation of the heaps is differ from $2$-adic valuation of the original heap. Therefore, every position in $P_o$ has no option in $P_o$.
Next, in position $\langle z_1, z_2, \ldots, z_n \rangle \in N_o,$ assume that $v_2(z_1) \leq v_2(z_2) \leq \cdots \leq v_2(z_n)$ and $v_2(z_1) \neq v_2(z_n)$ and we show that from this position one can obtain a position in $P_o$ in a single move.
If the number of $j$ such that $v_2(z_1) < v_2(z_j)$ is less than or equal to $\frac{n-1}{2},$ then from Proposition \ref{prop_2adic2}, one can split every heap whose $2$-adic valuation is larger than $v_2(z_1)$ into two heaps whose $2$-adic valuations are $v_2(z_1)$.
For the other cases, one can split $\frac{n-1}{2}$ heaps whose $2$-adic valuations are larger than $v_2(z_1)$ into $n-1$ heaps whose $2$-adic valuations are $v_2(z_1)$.
\end{proof}

\section{Half-delete Nim}
In this section, we introduce Half-delete Nim.
In this ruleset, differ from NMTH-delete nim, the player removes just half heaps of all heaps in a move, so the number of heaps in this ruleset must be an even number.
We also introduce a generalization of this ruleset.

\subsection{The rule of Half-delete Nim}

\begin{definition}[Half-delete Nim]
There are $n$ ($=2m$) heaps of tokens. The player performs the following two operations in succession on his/her turn.
\begin{itemize}
    \item Selects $m$ heaps and deletes them.
    \item Splits each of the remaining $m$ heaps into two heaps.
    \end{itemize}
\end{definition}
In particular, if $n=2$, this game is the same as VDN.

\subsection{Characterizing positions in Half-delete Nim}
\begin{theorem}
\label{thm_half}
Let $Z=\langle z_1, z_2, \ldots, z_{2m} \rangle$ be a Half-delete Nim position, where each $z_i$ is the number of tokens and $z_i \leq z_{i+1}$ for any $i$. 
Let $2^s$ be the smallest power of $2$ greater than $z_{m+1}$.  
Then $Z$ is a $\mathcal{P}$-position if and only if $z_1, z_2, \ldots, z_{2m}$ satisfy both of the following two conditions:

\renewcommand{\labelenumi}{(\alph{enumi})}
\begin{enumerate}
\item
all $z_1, z_2, \ldots, z_{m+1}$ are odd,
\item
For any $l$, if $z_l$ is even, then $2^s \leq z_l$.
\end{enumerate}
\end{theorem}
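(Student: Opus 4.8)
The plan is to first replace conditions (a) and (b) by a single, order-free witness condition that is far easier to push through a move, and then run the standard $\mathcal{P}/\mathcal{N}$ backward induction. Concretely, I would prove the equivalence: $Z$ satisfies (a) and (b) if and only if there is a power of two $2^r$ such that at least $m+1$ of the heaps are smaller than $2^r$ and every even heap is at least $2^r$. The forward direction uses the statement's own $2^s$; the converse observes that any such witness makes the smallest $m+1$ heaps odd and forces the statement's $2^s$ to satisfy $2^s\le 2^r$, recovering (b). Let $P$ be the set of positions admitting such a witness and $N$ its complement. Since every move deletes $m\ge 1$ nonempty heaps while preserving the token count of the heaps it splits, the total number of tokens strictly decreases; the game is finite and non-loopy, so it suffices to check (i) every terminal position lies in $P$, (ii) no option of a $P$-position lies in $P$, and (iii) every $N$-position has an option in $P$. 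Part (i) is immediate: a terminal position has at most $m-1$ heaps of size $\ge 2$, hence at least $m+1$ heaps of size $1$, and the witness $2^r=2$ works.

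For (ii), take $Z\in P$ with witness $2^s$, so at most $m-1$ heaps are $\ge 2^s$. Any legal move keeps $m$ splittable heaps, so it must retain a heap $w<2^s$; such a $w$ is odd (all heaps below $2^s$ are odd) and splittable, hence $w\ge 3$. Splitting $w$ yields an even part $b$ with $2\le b\le w-1<2^s$, so the option $Z'$ contains an even heap below $2^s$. If $Z'$ were in $P$ with witness $2^t$, that even heap forces $2^t\le b<2^s$, whence $2^{t+1}\le 2^s$. But $Z'$ has at least $m+1$ heaps below $2^t$ spread over $m$ split-pairs, so by pigeonhole some kept heap splits into two heaps both below $2^t$; both are odd, so that kept heap is even and smaller than $2^{t+1}\le 2^s$, contradicting that every even heap of $Z$ is $\ge 2^s$. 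Hence $Z'\in N$.

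For (iii), let $Z\in N$ with $e$ even heaps. If $e=0$ then $Z$ is all odd, hence in $P$, impossible; so $e\ge 1$. If $e\ge m$, keep any $m$ even heaps and split each into two odd heaps, giving an all-odd option in $P$. In the remaining range $1\le e\le m-1$ I would take $2^r$ to be the smallest power of two exceeding the smallest even heap, so that every even heap is $\ge 2^r$ and at least one even heap lies in $[2^r,2^{r+1})$. Because $Z\in N$ and every even heap already clears $2^r$, the witness $2^r$ can fail only through a shortage of heaps below $2^r$, and this shortage is exactly the statement that at least $m-e$ odd heaps exceed $2^r$. I then keep all $e$ even heaps together with $m-e$ of these large odd heaps: each large odd heap $w$ is split as $1+(w-1)$ with $w-1$ even and $\ge 2^r$; each kept even heap below $2^{r+1}$ is split into two odd heaps below $2^r$; each kept even heap $\ge 2^{r+1}$ is split into two odd heaps. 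A direct count gives exactly $m-e$ even heaps (all $\ge 2^r$) and $m+q_2\ge m+1$ heaps below $2^r$, where $q_2\ge 1$ is the number of kept even heaps below $2^{r+1}$; thus the option lies in $P$.

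The genuine obstacle is (iii) in the range $1\le e\le m-1$: one must manufacture a clean power-of-two gap — at least $m+1$ odd heaps below the threshold and every even part above it — from arbitrary heap sizes. The nonobvious insight, and the reason the reformulation pays off, is that the correct threshold is pinned to the smallest even heap, and that the hypothesis $Z\in N$ supplies precisely the counting inequality $\#\{\text{odd heaps}>2^r\}\ge m-e$ needed to find enough large odd heaps to retain. By contrast, once the witness reformulation and the split-pair pigeonhole are in hand, (ii) is essentially mechanical.
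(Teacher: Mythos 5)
Your argument is correct and complete in substance, but it follows a genuinely different route from the paper's. The paper never proves Theorem \ref{thm_half} directly: it obtains it as the case $k=2$ of Theorem \ref{thm_main}, whose proof keeps the heaps sorted, pins the threshold to the $((k-1)m+1)$-st heap, and relies on Lemma \ref{lem_oddoidevenoid} to control how oddoid and evenoid numbers split; in particular the ``no $P$-option of a $P$-position'' step is argued by tracking which of the indices $1,\dots,(k-1)m+1$ must be split. You instead replace (a) and (b) by an order-free witness condition (some power $2^r$ with at least $m+1$ heaps below it and every even heap at or above it) --- an equivalence you verify correctly --- and then dispatch that same step with a pigeonhole over the $m$ split-pairs: $m+1$ small pieces distributed among $m$ pairs force a pair of odd pieces whose even sum is a too-small even heap of the original position. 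This buys a short, self-contained proof for $k=2$ (and the idea would even generalize: $(k-1)m+1$ pieces below $k^r$ in $m$ groups of $k$ force a full group, which Lemma \ref{lem_oddoidevenoid}(1) then turns into an evenoid parent), whereas the paper's route buys all $k$ at once. Two small repairs to your write-up, neither fatal: in (iii) the threshold must be the \emph{largest} power of two \emph{not exceeding} the smallest even heap (the ``smallest power exceeding'' it would lie strictly above that heap and contradict the properties you immediately invoke), and the kept even heaps of size at least $2^{r+1}$ must be split specifically as $1+(v-1)$ so that each contributes one piece below $2^r$ --- which is exactly what your count of $m+q_2$ small heaps presupposes.
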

This theorem is a special case of Theorem \ref{thm_main} in the next subsection, so the proof is omitted.

\subsection{$\frac{k-1}{k}n$-delete nim}
We consider a generalization of Half-delete Nim.

\begin{definition}[$\frac{k-1}{k}n$-delete Nim]
There are $n$ ($=km$) heaps of tokens. The player performs the following two operations in succession on his/her turn.
\begin{itemize}
    \item Selects $(k-1)m$ heaps and deletes them.
    \item Splits each of the remaining $m$ heaps into $k$ heaps.
    \end{itemize}
\end{definition}
In particular, if $k=2$, this game is the same as Half-delete Nim, and if $k=n$, this game is the same as ABO-delete Nim.

\begin{definition}
A positive integer whose remainder divided by $k(k-1)$ lies between $1$ and $k-1$ is called a $k$-{\em oddoid number}, and any other positive integer is called a $k$-{\em evenoid number}.
A heap with an oddoid number of tokens is called a $k$-{\em oddoid heap}, and a heap with an evenoid number of tokens is called a $k$-{\em evenoid heap}.
\end{definition}

In particular, if $k=2$, oddoid and evenoid numbers are consistent with the usual notion of odd and even numbers.

\begin{lemma}\label{lem_oddoidevenoid}
\indent
\renewcommand{\labelenumi}{(\arabic{enumi})}
\begin{enumerate}
\item It is not possible to split an $k$-oddoid number into $k$ $k$-oddoid numbers.

\item All integers $x$ between $k$ and $k(k-1)$ can be split into $k$ integers that are between $1$ and $k-1$.

\item Let $s$ be a positive integer. Every $k$-evenoid number $y < k^s$ can be split into $k$ $k$-oddoid numbers which are less than $k^{s-1}$.
\end{enumerate}
\end{lemma}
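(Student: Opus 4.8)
The plan is to reduce every claim to elementary arithmetic in the residue description of the two classes. I would first record that a positive integer is $k$-oddoid exactly when it equals $k(k-1)b+r$ for some $b\ge 0$ and $r\in\{1,\dots,k-1\}$, whereas it is $k$-evenoid exactly when its remainder modulo $k(k-1)$ lies in $\{0\}\cup\{k,\dots,k(k-1)-1\}$. The one auxiliary fact I would prove up front, since (3) hinges on it, is that $k^{t}\equiv k \pmod{k(k-1)}$ for all $t\ge 1$ (which follows from $k^{t-1}\equiv 1\pmod{k-1}$); hence $k^{t}-1\equiv k-1$ is $k$-oddoid and $k^{t}-k\equiv 0$ is $k$-evenoid, so the largest $k$-evenoid number strictly below $k^{t}$ is precisely $k^{t}-k$.

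Part (1) is then immediate: if a $k$-oddoid number were a sum of $k$ $k$-oddoid numbers with residues $r_1,\dots,r_k\in\{1,\dots,k-1\}$, its residue would be $\sum_i r_i\in\{k,\dots,k(k-1)\}$ modulo $k(k-1)$, which reduces into $\{k,\dots,k(k-1)-1\}\cup\{0\}$, the $k$-evenoid residue set, a contradiction. Part (2) is a bounded-composition count: writing each heap as $1+e_i$ with $e_i\in\{0,\dots,k-2\}$, the sums $k+\sum_i e_i$ sweep out every integer from $k$ to $k(k-1)$, so any $x$ in that interval is attainable (e.g.\ by filling the $e_i$ greedily) and each heap $1+e_i\le k-1$ is automatically $k$-oddoid.

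For (3) I would write the given $k$-evenoid $y<k^{s}$ (so $y\le k^{s}-k$ by the opening remark) as $y=k(k-1)B+\rho$ with $\rho=y\bmod k(k-1)$ and $B=\lfloor y/(k(k-1))\rfloor$, and construct the $k$ target heaps as $y_i=k(k-1)b_i+r_i$. The crucial translation of the size bound is that a $k$-oddoid heap obeys $y_i<k^{s-1}$ iff $0\le b_i\le b^{*}$, where $b^{*}=\tfrac{k^{s-2}-1}{k-1}$, using the identity $k^{s-1}-1=k(k-1)b^{*}+(k-1)$. The construction then decouples into a residue part and a bulk part: choose $r_i\in\{1,\dots,k-1\}$ with $\sum_i r_i=\rho$ when $\rho\ge k$ (possible by (2)), or all $r_i=k-1$ when $\rho=0$; and choose $0\le b_i\le b^{*}$ with $\sum_i b_i=B$ in the first case and $\sum_i b_i=B-1$ in the second.

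The step I expect to be the real obstacle is the feasibility of the bulk part, namely that the required sum never exceeds the capacity $kb^{*}$, because the count is off by exactly one: $y\le k^{s}-k$ only yields $B\le\tfrac{k^{s-1}-1}{k-1}=kb^{*}+1$. I would close this gap by isolating the extremal case: $B=kb^{*}+1$ forces $y=k^{s}-k$, whose residue is $\rho=0$; therefore in the $\rho\ge k$ branch one in fact has $y<k^{s}-k$, whence $B\le kb^{*}$, while in the $\rho=0$ branch the needed sum is only $B-1\le kb^{*}$. Both branches are thus feasible, with the tight configuration $y=k^{s}-k$ corresponding to all $k$ heaps equal to $k^{s-1}-1$. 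The case $s=1$ is vacuous, since every positive integer below $k$ is already $k$-oddoid.
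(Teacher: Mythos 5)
Your proof is correct and follows essentially the same route as the paper's: the residue-sum contradiction for (1), an explicit bounded composition for (2), and for (3) the decomposition $y=k(k-1)\cdot(\text{bulk})+(\text{residue in }[k,k(k-1)])$ with the bulk spread over parts at most $\frac{k^{s-2}-1}{k-1}$ and the residue split via (2), giving pieces at most $k^{s-1}-1$. The only difference is cosmetic: you branch on $\rho=0$ versus $\rho\ge k$ and verify the capacity bound $\sum b_i\le kb^{*}$ explicitly at the extremal case $y=k^{s}-k$, whereas the paper absorbs both branches by allowing $\beta$ to range up to $k(k-1)$ inclusive and asserts the corresponding bound on $\alpha$ directly.
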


\begin{proof}
(1) We can prove this in the similar way to (i) in the proof of Theorem \ref{thm_abo}. That is, if we can split a $k$-oddoid number into $k$ $k$-oddoid number, then we have a contradiction because the sum of reminders of all $k$-oddoid numbers split by $k(k-1)$ is between $k$ and $k(k-1)$, which contradicts to the original number is an $k$-oddoid number.

(2) Let $x = kp + q \ (0\leq q \leq k-1)$. Then, $1 \leq p \leq k-1$.
If $p < k-1$, then $x = q(p+1) + (k-q)p$ and if $p = k-1$, then $x = kp$. Thus, for both cases, $x$ can be split into $k$ numbers which are between $1$ and $k-1$.

(3) Since $k^s - k$ can be divided by $k(k-1)$, the reminder of $k^s$ divided by $k(k-1)$ is $k$. Thus, the largest $k$-evenoid number less than $k^s$ is $k^s - k$.
Therefore, for the case $s \leq 2$, we have proved in (2).

Assume that $s \geq 3$.
$$
k^s - k = \frac{k(k^{s-2}-1)}{k-1} k(k-1) + k(k-1)
$$
and $\frac{k(k^{s-2}-1)}{k-1}$ is an integer, so for a $k$-evenoid number $y < k^s$,

$$
y = \alpha k(k-1) + \beta \ \ \left(\alpha \leq \frac{k(k^{s-2}-1)}{k-1}, k\leq \beta \leq k(k-1)\right)
$$
and we can split $\alpha$ into $\alpha_1, \alpha_2, \ldots, \alpha_k \leq \frac{ k^{s-2}-1}{k-1}$. From (2), we can also split $\beta$ into $1 \leq \beta_1, \beta_2, \ldots, \beta_k \leq k-1$.
Let $\gamma_i = \alpha_i k(k-1)+\beta_i$ for any $1\leq i\leq k$,then $\gamma_1, \gamma_2, \ldots, \gamma_k$ are all $k$-oddoid numbers, $\gamma_1+\gamma_2+\cdots + \gamma_k = y$, and $\gamma_i \leq \frac{k^{s-2}-1}{k-1} k(k-1)+k-1<k^{s-1}.$
\end{proof}

Using this lemma, we can give the following winning strategy for $\frac{k-1}{k} n$-delete Nim by replacing the odd and even heaps of Theorem \ref{thm_half} with $k$-oddoid and $k$-evenoid heaps, respectively.

\begin{theorem}\label{thm_main}
Let $Z=\langle z_1, z_2, \ldots, z_{km}\rangle$ be the $\frac{k-1}{k} n$-delete nim position, where each $z_i$ is the number of tokens and $z_i < z_{i+1}$.
Let $k^s$ be the smallest power of $k$ greater than $z_{(k-1)m+1}$.
Then $Z$ is a $\mathcal{P}$-position if and only if $z_1, z_2, \ldots, z_{km}$ satisfy both of the following two conditions:

\begin{enumerate}
\item[(a)]
all $z_1, z_2, \ldots, z_{(k-1)m+1}$ are $k$-oddoid,

\item[(b)]
For any $l$, if $z_l$ is $k$-evenoid, then $k^s \leq z_l$.
\end{enumerate}
\end{theorem}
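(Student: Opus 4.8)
The plan is to run the standard $\mathcal{P}$/$\mathcal{N}$ partition argument. Let $P$ be the set of positions satisfying (a) and (b) and let $N$ be its complement; since the game is finite with no infinite play, it suffices to check that every terminal position lies in $P$, that no position of $P$ has an option in $P$, and that every position of $N$ has an option in $P$. A preliminary simplification is useful throughout: because $z_{(k-1)m+1}<k^s$ by the definition of $s$, condition (b) already forces $z_1,\dots,z_{(k-1)m+1}$ to be $k$-oddoid, so (b) implies (a), and membership in $P$ is equivalent to the single clause ``every heap of size less than $k^s$ is $k$-oddoid.'' A move is legal exactly when at least $m$ heaps have size $\ge k$, so a terminal position has at least $(k-1)m+1$ heaps of size $\le k-1$; these are $k$-oddoid, forcing $s=1$, and since the least $k$-evenoid number is $k=k^s$ condition (b) holds. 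Hence terminals lie in $P$.

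For the impossibility direction I would argue as follows. In $Z\in P$ at most $m-1$ heaps are $\ge k^s$, so any choice of $m$ kept heaps retains at least one heap of size $<k^s$, which is $k$-oddoid; by Lemma \ref{lem_oddoidevenoid}(1) splitting it produces a $k$-evenoid piece $e_0<k^s$. Write $k^{s'}$ for the threshold of the resulting position $Z'$. If $s'\ge s$, then $e_0<k^s\le k^{s'}$ is a $k$-evenoid heap below threshold, so $Z'\notin P$. If $s'<s$, assume $Z'\in P$ and count: each of the $c\ge 1$ kept heaps of size $<k^s$ forces (Lemma \ref{lem_oddoidevenoid}(1)) a $k$-evenoid piece, which must be $\ge k^{s'}$, while each of the remaining $m-c$ kept heaps, being $\ge k^s$, splits into $k$ pieces the largest of which is $\ge k^{s-1}\ge k^{s'}$. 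This produces $m$ distinct heaps of $Z'$ of size $\ge k^{s'}$, contradicting that a $P$-position has at most $m-1$ heaps $\ge k^{s'}$. So no $P$-option exists.

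For the reachability direction the counting is governed by Lemma \ref{lem_oddoidevenoid}(1): a $k$-oddoid heap splits into at most $k-1$ oddoids whereas a full split of a $k$-evenoid heap gives $k$, so to produce the needed $(k-1)m+1$ oddoids one must split at least one $k$-evenoid heap entirely into oddoids. An $N$-position always provides a usable small $k$-evenoid heap $E<k^s$ — one among the $(k-1)m+1$ smallest heaps when (a) fails, and one below $k^s$ when (b) fails — and Lemma \ref{lem_oddoidevenoid}(3) splits $E$ into $k$ oddoids $<k^{s-1}$. The principal move keeps $E$ together with $m-1$ large $k$-oddoid heaps (each $\ge k^{s-1}$), splits each such $H$ as $(k-1)$ copies of $1$ and one piece $H-(k-1)$ — a residue computation in the style of Theorem \ref{thm_abo} shows $H-(k-1)$ is $k$-evenoid and $\ge k^{s-1}$ — and leaves $E$ split as above. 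All created oddoids are then $<k^{s-1}$, so the recomputed threshold obeys $k^{s'}\le k^{s-1}$, the $m-1$ evenoid pieces clear it, and the option lies in $P$.

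The step I expect to be the main obstacle is making the reachability construction uniform: the clean form above needs the $m-1$ largest kept heaps to be $k$-oddoid, but when many of them are large $k$-evenoid heaps, splitting those into oddoids threatens to create oddoids above $k^{s-1}$ and thereby raise the recomputed threshold $k^{s'}$, stranding the evenoid pieces below it. Handling this requires either passing to an all-$k$-oddoid target (available whenever at least $m$ heaps are $k$-evenoid) or splitting each large kept evenoid into $k-1$ small oddoids and one large oddoid by a careful residue choice (Lemma \ref{lem_oddoidevenoid}(2)), keeping the number of heaps above the threshold at exactly $m-1$. The arithmetic underpinning all of this is that every power $k^{j}$ with $j\ge 1$ is $k$-evenoid, so the smallest $k$-oddoid number not below $k^{s-1}$ already exceeds $k^{s-1}+(k-1)$; verifying the resulting size inequalities uniformly in $k$, $m$, and $s$ is the delicate part.
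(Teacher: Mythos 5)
Your overall strategy coincides with the paper's (partition into $P$ and $N$, check terminals, verify the two closure properties), your observation that (b) already subsumes (a) is a correct and useful simplification, and your proof that no $P$-position has an option in $P$ is complete --- indeed sharper than the paper's own step (i): the dichotomy on whether the new threshold satisfies $s'\ge s$ or $s'<s$, combined with counting one piece of size $\ge k^{s'}$ coming out of each of the $m$ kept heaps against the bound of at most $m-1$ heaps of size $\ge k^{s'}$ in any $P$-position, nails down why the recomputed threshold cannot rescue the option. The genuine gap is in the reachability direction, exactly where you flag it: your ``principal move'' requires the $m-1$ large kept heaps to be $k$-oddoid, you never construct the $P$-option when some of them are $k$-evenoid, and you only gesture at two possible fixes without carrying either out.

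The fix is the second of your two suggestions, and it is what the paper does; the verification is short, so here it is. Take the evenoid heap $E=z_i<k^{\sigma}$ with $k^{\sigma-1}\le E<k^{\sigma}$ (so $\sigma\le s$, and $\sigma=s$ when only (b) fails), keep $E$ together with the $m-1$ largest heaps (when only (b) fails, keep $z_{(k-1)m+1},\ldots,z_{km}$), and note every kept heap $H\ne E$ satisfies $H\ge k^{\sigma-1}$. Split $E$ by Lemma \ref{lem_oddoidevenoid}(3) into $k$ oddoids $<k^{\sigma-1}$. If a kept $H$ is $k$-evenoid, write $H=\alpha k(k-1)+\beta$ with $k\le\beta\le k(k-1)$ and split it as $\beta_1,\ldots,\beta_{k-1},\ \alpha k(k-1)+\beta_k$ with all $\beta_t\in[1,k-1]$ via Lemma \ref{lem_oddoidevenoid}(2): all $k$ pieces are $k$-oddoid, $k-1$ of them are $\le k-1$, and \emph{no evenoid piece is created at all} --- so large kept evenoids are harmless, not an obstacle. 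If a kept $H$ is $k$-oddoid, split it as $1,\ldots,1,H-(k-1)$; since $k^{\sigma-1}\equiv k\pmod{k(k-1)}$, the numbers $k^{\sigma-1},\ldots,k^{\sigma-1}+k-2$ are all evenoid, so an oddoid $H\ge k^{\sigma-1}$ is in fact $\ge k^{\sigma-1}+k-1$ and the unique evenoid piece $H-(k-1)$ is $\ge k^{\sigma-1}$. In total the option has $k+(k-1)(m-1)=(k-1)m+1$ oddoid pieces of size $<k^{\sigma-1}$, so its threshold is at most $k^{\sigma-1}$, while every evenoid piece is $\ge k^{\sigma-1}$; hence the option satisfies (b), therefore (a), and lies in $P$. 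With this paragraph inserted your argument is complete and agrees with the paper's proof of Theorem \ref{thm_main}.
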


\begin{proof}
Let $P$ be the set of positions which satisfy both (a) and (b), and $N$ be the complement of $P$.
We show a position in $P$ has no option in $P$ in (i), and a position in $N$ has at least one option in $P$ in (ii) and (iii).

(i)
Assume that in a move of $\frac{k-1}{k}n$-delete Nim, the remaining all $m$ heaps are $k$-oddoid heaps.
Then from Lemma \ref{lem_oddoidevenoid} (1), one can obtain at most $k-1$ $k$-oddoid heaps by splitting a remaining heap. Thus, each option is not in $P$.
Therefore, if an option of a position in $P$ is also in $P$, a $k$-evenoid heap has to be split into $k$ $k$-oddoid heaps.
At least one $k$-oddoid heap after this split has more than $k^{s-1}$ stones.
On the other hand, the player has to split at least one of the heaps whose sizes are $z_1, z_2, \ldots, z_{(k-1)m+1}$, but any $k$-evenoid heap from this split has less than $k^s$ stones, which contradicts to (b).

(ii)
Assume that (a) is not satisfied. That is, there exists a $k$-evenoid $z_i\ (1\leq i\leq (k-1)m+1)$.
Let $s$ be an integer such that $k^{s-1} \leq z_i < k^s$.
Since $z_i$ is a $k$-evenoid number, $s \geq 2$.
Consider to split $i$-th heap and $(k-1)m+2, (k-1)m+3, \ldots, km$-th heaps.
From Lemma \ref{lem_oddoidevenoid}(3), $z_i$ can be split into $k$ $k$-oddoid heaps less than $k^{s-1}$.
If $z_j(j\geq (k-1)m+2)$ is a $k$-evenoid number, then let $z_j = \alpha k(k-1)+\beta(k\leq \beta \leq k(k-1))$. Then, from Lemma \ref{lem_oddoidevenoid}(2), $z_j$ can be split into $\beta_1, \beta_2, \ldots, \beta_{k-1}, \alpha k (k-1)+\beta_k,$ where $1 \leq \beta_1, \beta_2, \ldots, \beta_k \leq k$.
If $z_j(j\geq (k-1)m+2)$ is a $k$-oddoid number, then $z_j$ can be split into $1,1,\ldots, 1,z_j-(k-1)$.
Here, all $k^{s-1}, k^{s-1}+1, \ldots, k^{s-1}+k-2$ are $k$-evenoid number, so if $k^{s-1}\leq z_i<z_j$ and $z_j$ is a $k$-evenoid number, then $k^{s-1} \leq z_j - (k-1)$.
Therefore, for a position, if (a) is not satisfied, then the position has an option in $P$.

(iii)
Consider the case that (a) is satisfied but (b) is not satisfied.
That is, $z_1, z_2, \ldots, z_{(k-1)m+1}$ are $k$-oddoid numbers and there exists $z_i(i > (k-1)m+1)$ such that $z_i$ is a $k$-evenoid number and $z_i < k^s$.
Split the heaps whose sizes are $z_{(k-1)m+1}, z_{(k-1)m+2}, \ldots, z_{km}$ as follows:
From Lemma \ref{lem_oddoidevenoid}(3), $z_i$ can be split into $k$ $k$-oddoid numbers less than $k^{s-1}$.
For other $z_j$, similar to (ii), if $z_j$ is a $k$-evenoid number, then it can be split into $\beta_1, \beta_2, \ldots, \beta_{k-1}, \alpha k(k-1)+\beta_k$ and if $z_j$ is a $k$-oddoid number, then it can be split into $1,1, \ldots, 1,z_j-(k-1)$.
Thus,  for a position, if (a) is satisfied but (b) is not satisfied, then the position has an option in $P$.
\end{proof}

\section{Single-delete Nim}
Finally, in this section, we consider Single-delete Nim.
 In this ruleset, the player can remove only one heap.

\label{sec_single}
\subsection{The rule of Single-delete Nim}
\begin{definition}[Single-delete Nim]
There are $n$ heaps of tokens. The player performs the following two operations in succession on his/her turn.
\begin{itemize}
    \item Selects one heap and deletes it.
    \item Selects one heap of the remaining $n-1$ heaps and splits it into two heaps.
\end{itemize}
\end{definition}

If $n = 2$, then the ruleset is the same as VDN, so this ruleset is a generalization of VDN.
The terminal position in Single-delete Nim is only $\langle 1, 1, \ldots, 1 \rangle$.


\subsection{Characterizing positions in Single-delete Nim}
\begin{theorem}
If $n=3$ in the Single-delete Nim,
the position $\langle x,y,z \rangle$ is a $\mathcal{P}$-position if and only if
$v_2(x) = v_2(y) = v_2(z)$.
\end{theorem}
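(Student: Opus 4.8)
The plan is to recognize this as the odd case $n=3$ of Theorem \ref{thm_nmth} and, for completeness, to supply the direct $\mathcal{P}/\mathcal{N}$ argument. First I would observe that a move in Single-delete Nim with $n=3$ deletes one heap and splits one of the remaining two, which is exactly NMTH-delete Nim with $n=3$ (there the only admissible choice is $k=1$). Hence the claimed characterization is precisely condition (ii) of Theorem \ref{thm_nmth} specialized to $n=3$, namely $v_2(x)=v_2(y)=v_2(z)$. If a standalone proof is wanted, I would set $P=\{\langle x,y,z\rangle : v_2(x)=v_2(y)=v_2(z)\}$ and let $N$ be its complement, note that $P$ contains the unique terminal position $\langle 1,1,1\rangle$ and that the game is not loopy, and then verify the two standard closure properties: no option of a $P$-position lies in $P$, and every $N$-position has an option in $P$.

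For the closure of $P$, I would analyze a single move, which keeps one heap intact, deletes another, and replaces the third heap $w$ by two pieces $a,b$ with $a+b=w$ and $a,b\geq 1$. Since the kept heap and $w$ share the common valuation $v$, I would apply Proposition \ref{prop_2adic1} to the split: if $v_2(a)=v_2(b)$ then $v_2(w)>v_2(a)$, forcing $v_2(a)=v_2(b)<v$; and if $v_2(a)\neq v_2(b)$ the two pieces already have distinct valuations. In either case the three resulting valuations are not all equal, so the option lies in $N$.

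For reachability into $P$, let $t=\min\{v_2(x),v_2(y),v_2(z)\}$; since the position is in $N$, at least one heap has valuation strictly larger than $t$. I would keep a heap of valuation $t$, delete a third heap, and split a heap $w$ with $v_2(w)>t$ as $w=2^t+(w-2^t)$. By the construction following Proposition \ref{prop_2adic2}, both pieces have valuation exactly $t$, and both are positive because $w\geq 2^{t+1}>2^t$. The three resulting heaps then all have valuation $t$, so the option lies in $P$.

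I do not expect a serious obstacle, since with $n=3$ the three heaps are automatically partitioned into the distinct roles delete/keep/split, and the minimum-valuation heap can always serve as the heap kept intact. The only point needing care is in the closure argument, where I must confirm that splitting a heap of valuation $v$ can never produce two pieces both of valuation $v$; this is exactly the contrapositive of Proposition \ref{prop_2adic1}(i).
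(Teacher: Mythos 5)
Your proposal is correct and matches the paper, which likewise disposes of this theorem by observing that Single-delete Nim with $n=3$ coincides with NMTH-delete Nim with $n=3$ and invoking case (ii) of Theorem \ref{thm_nmth}; your supplementary standalone argument is just that theorem's proof specialized to $n=3$ and is sound. No issues.
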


This result was introduced in Sakai \cite{sak21}. This theorem is a special case of Theorem \ref{thm_nmth}.

Further, we introduce a theorem for the case $n = 4$.

\begin{theorem}
Denote by $I_k(z)$ the $k$-th digit from the bottom of the binary representation of non-negative integer $z$.
For $n=4$ in the Single-delete Nim position $\langle w, x, y, z \rangle$,
let $a = v_2(w)$, $b = v_2(x)$, $c = v_2(y)$, $d = v_2(z)$.
If $a\leq b\leq c \leq d$, $\langle w, x, y, z\rangle$ is a $\mathcal{P}$-position if and only if $a,b,c$,  and $d$ satisfy one of the following conditions (1), (2), (3), (4), or (5).

\begin{itemize}
    \item [(1)]$a = b = c = d$.
    \item[(2)]$a < b = c = d$ and 
    \begin{itemize}
    \item [(2A)]$I_{d+1}(w) = 0$.
   \end{itemize}
    \item[(3)]$a < b < c = d$ and the following conditions (3A)-(3C) are satisfied. 
    \begin{itemize}
    \item [(3A)]$I_{d+1}(w) = I_{d+1}(x) = 0$.
    \item[(3B)]$I_k(w) + I_k(x) \geq 1$ for $b + 2 \leq k \leq d$.
    \item [(3C)] $I_{b+1}(w) = 1$.
   \end{itemize}
   \item[(4)]$a < b < c < d$ and the following conditions (4A)-(4E) are satisfied.
   \begin{itemize}
    \item [(4A)]$I_{d+1}(w) = I_{d+1}(x) = I_{d+1}(y) = 0$.
    \item[(4B)]$I_j (w)+I_j (x)+I_j (y) \geq 2$ for $c+2 \leq j \leq d$.
    \item [(4C)] $I_{c+1}(w) = I_{c+1}(x) = 1$.
    \item [(4D)]$I_k(w) + I_k(x)\geq  1$ for $b + 2 \leq k\leq c$.
    \item [(4E)]$I_{b+1}(w) = 1$.
\end{itemize}

\item[(5) ]$a < b < c < d$ and the following conditions (5A)-(5F) are satisfied.
   \begin{itemize}
    \item [(5A)]$I_i(w) + I_i(x) + I_i(y) + I_i(z) \in \{0, 3, 4\}$ for $i \geq d + 2$.
    \item[(5B)]$I_{d+1}(w) = I_{d+1}(x) = I_{d+1}(y) = 1$.
    \item [(5C)]$I_j (w)+I_j (x)+I_j (y) \geq 2$ for $c+2 \leq j \leq d$.
    \item [(5D)]$I_{c+1}(w) = I_{c+1}(x) = 1$.
    \item [(5E)]$I_k(w) + I_k(x) \geq 1$ for $b + 2 \leq k \leq c$.
    \item [(5F)]$I_{b+1}(w) = 1$.
\end{itemize}
   
\end{itemize}

\end{theorem}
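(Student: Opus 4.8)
The plan is to run the standard $\mathcal{P}$/$\mathcal{N}$-partition argument already used for Theorems \ref{thm_abo}, \ref{thm_nmth}, and \ref{thm_main}. Let $P$ be the set of positions $\langle w,x,y,z\rangle$ (with $a=v_2(w)\le b=v_2(x)\le c=v_2(y)\le d=v_2(z)$) satisfying one of (1)--(5), and let $N$ be the complement of $P$ in the set of all positions. Since every move deletes a non-empty heap while the split merely redistributes the tokens of one survivor, the total number of tokens strictly decreases at each turn, so the game is non-loopy; and the unique terminal position $\langle 1,1,1,1\rangle$ has $a=b=c=d=0$, hence lies in $P$ via (1). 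It therefore suffices to prove: (I) no position in $P$ has an option in $P$, and (II) every position in $N$ has at least one option in $P$.

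Every move transforms $\langle w,x,y,z\rangle$ by discarding one coordinate and replacing one of the three survivors $u$ by a pair $u_1,u_2$ with $u_1+u_2=u$. The behaviour of the $2$-adic valuation under such a split is governed entirely by Propositions \ref{prop_2adic1} and \ref{prop_2adic2}: splitting $u$ into two equal-valuation parts forces their common valuation strictly below $v_2(u)$ and produces a carry into the higher digits, whereas splitting into unequal-valuation parts leaves one part with valuation exactly $v_2(u)$. These two propositions, combined with digit-by-digit bookkeeping of the bits $I_k$, are the workhorses of both directions. A quick illustration of the mechanism for family (1): any split of a coordinate either yields a valuation pattern of the shape $(j,j,a,a)$ with $j<a$, or of the shape $(a,a,a,e)$ with $e>a$, neither of which matches the strict inequalities demanded by (1)--(5); so the option automatically falls in $N$.

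For direction (I), I would argue family by family. The equalities among $a,b,c,d$ determine which coordinate can be split while still matching the valuation pattern of some family, and Proposition \ref{prop_2adic1} shows that producing a coordinate of a given valuation by splitting always strictly raises the valuation of the parent, so the lowest-valuation slots cannot be replicated. The digit conditions (2A), (3A)--(3C), (4A)--(4E), (5A)--(5F) are then shown to be jointly self-obstructing: any legal split of one of the valuation-constrained lower-indexed heaps necessarily violates one of the $I_k$-sum inequalities (for instance, a split preserving the required valuation data forces a digit sum below the threshold in (3B)/(4B)/(4D)/(5C)/(5E), or drives the digit sum of (5A) out of $\{0,3,4\}$). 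Carrying this out amounts to checking, for each family and each choice of deleted and split heap, that the resulting $4$-tuple satisfies none of (1)--(5).

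For direction (II), I would classify a position in $N$ by the first condition in the hierarchy (1)--(5) that it fails and construct an explicit restoring move. Proposition \ref{prop_2adic2} lets me split a heap of large valuation into two parts of any prescribed smaller common valuation, supplying the required low-order valuations, and the remaining freedom in the high-order bits of $u_1,u_2$ (constrained only by $u_1+u_2=u$ and its carries) is used to repair the $I_k$-sum conditions. The genuinely hard part will be direction (II) for families (4) and (5), and in particular re-establishing (5A), the requirement $I_i(w)+I_i(x)+I_i(y)+I_i(z)\in\{0,3,4\}$ for $i\ge d+2$: unlike the single-heap valuation conditions, this is a global parity-type constraint on the top bits of all four heaps at once, so a restoring split must generate exactly the right carries in every high digit simultaneously. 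Controlling these simultaneous carries — showing the constraint can always be met by some admissible split, and that it is destroyed by every admissible split out of a $(5)$-position — is where the main combinatorial difficulty lies, and I expect it to demand a careful bottom-up, digit-by-digit analysis of the addition $u_1+u_2=u$.
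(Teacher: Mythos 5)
There is a genuine gap: what you have written is a proof \emph{strategy}, not a proof. The framework you set up is the correct one (and matches what the paper uses for Theorems \ref{thm_abo}, \ref{thm_nmth}, and \ref{thm_main}): partition positions into $P$ and $N$, note the game is non-loopy and the terminal position $\langle 1,1,1,1\rangle$ lies in $P$ via (1), and prove that $P$-positions have no options in $P$ while $N$-positions have one. Your sample computation for family (1) is also correct: Proposition \ref{prop_2adic1} forces any option of an all-equal-valuation position into the patterns $(j,j,a,a)$ with $j<a$ or $(a,a,a,e)$ with $e>a$, neither of which appears among (1)--(5). But for this theorem the framework is the easy part. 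The entire content lies in the digit conditions (2A), (3A)--(3C), (4A)--(4E), (5A)--(5F), and for these you only assert that they are ``jointly self-obstructing'' in direction (I) and ``repairable'' in direction (II), explicitly deferring the verification. In direction (I) alone one must check, for each of the five families, each choice of deleted heap, each choice of split heap, and each way the split can redistribute bits (with carries), that the re-sorted option violates every one of (1)--(5); none of this case analysis is carried out. In direction (II) you yourself flag that restoring (5A) --- a simultaneous constraint on the high bits of all four heaps --- is the hard part and that you ``expect'' a digit-by-digit argument to work, which is precisely the step a proof must supply.

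For context, the paper does not prove this theorem either: it cites a separate Japanese technical report \cite{shi22} and states that the proof is ``long and complex.'' That is consistent with the diagnosis above --- the length and complexity reside exactly in the bookkeeping you have postponed. So the proposal cannot be credited as a correct proof; it is an accurate table of contents for one.
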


A proof of this theorem is shown in a Japanese report \cite{shi22}.
This theorem solves only the case of $n=4$, and the proof is long and complex, so we omit it.

\section*{Acknowledgements}
This work was partially supported by
JSPS KAKENHI Grant Numbers JP21K12191 and
JP22K13953.

%
%
%

\begin{thebibliography}{8}


\bibitem{abu22}
Abuku, T., Sakai, K., Shinoda, M. and Suetsugu,K. : Determining the Winner of Split-and-delete Nim, The 27th Game Programming Workshop, IPSJ, 17-24 (2022). (In Japanese) 

\bibitem{abu21}
Abuku, T. and Suetsugu, K. : Delete Nim, Journal of Mathematics, Tokushima University {\bf 55}, 75-81 (2021).

\bibitem{alb19}
Albert, M.H., Nowakousuki, R.J. and Wolfe, D. Lessons in Play: An Introduction to Combinatorial Game Theory (2nd ed.), A K Peters/CRC Press. (2019) 

\bibitem{bou02}
Bouton, C.L.: Nim, a Game with a Complete Mathematical Theory, Annals of Mathematics {\bf 3}, 35-39 (1902).

\bibitem{sak21}
Sakai, K. : Alice's Adventure in Puzzle-Land Vol.4, Nikkei Science (2021). (In Japanese) 

\bibitem{shi22}
Shinoda, M. : Generalizations of Delete Nim Game and determining the winner, The Special Interest Group Technical Reports of IPSJ Vol.2022-GI-47, No.5. 1-8 (2022). (In Japanese)

\bibitem{Siegel}
Siegel, A.N.: Combinatorial Game Theory, American Mathematical Society (2013).

\bibitem{sta08}
Stankova Z. and Rike T. (eds.) : A Decade of the Berkeley Math Circle Vol.1, Mathematical Circles Library, 159 (2008).

\end{thebibliography}
%

\end{document}